\theoremstyle{plain}
\newtheorem{theorem}{Theorem}[section]
\newtheorem{lemma}[theorem]{Lemma}
\newtheorem{corollary}[theorem]{Corollary}
\newtheorem{proposition}[theorem]{Proposition}
\newcommand{\bnum}{\begin{enumerate}}
\newcommand{\enum}{\end{enumerate}}
\numberwithin{equation}{section}
\begin{document}

\title{On relative commuting probability of  finite rings}
\author{Parama Dutta and Rajat Kanti Nath\footnote{Corresponding author}}
\date{}
\maketitle
\begin{center}\small{\it
Department of Mathematical Sciences, Tezpur University,\\ Napaam-784028, Sonitpur, Assam, India.\\

Emails:\, parama@gonitsora.com and rajatkantinath@yahoo.com}
\end{center}

\medskip

\begin{abstract}
In this paper we study the probability that the commutator of a randomly chosen pair of elements, one from a subring of a finite ring and other from the ring itself equals to a given element of the ring.
\end{abstract}

\medskip

\noindent {\small{\textit{Key words:}  finite ring, commuting probability, ${\mathbb{Z}}$-isoclinism of rings.}}

\noindent {\small{\textit{2010 Mathematics Subject Classification:}
16U70, 16U80.}}

\medskip

\section{Introduction}
Let $S$ be a subring of a finite ring $R$. The relative commuting probability of $S$ in $R$ denoted by ${\Pr}(S, R)$ is the probability that a randomly chosen pair of elements one from $S$ and the other from $R$ commute. That is 
\[
\Pr(S, R) = \frac{|\lbrace(x, y)\in S \times R : [x, y]  = 0 \rbrace|}{|S||R|}
\]
where $0$ is the additive identity of $R$ and $[x, y] = xy - yx$ is the commutator of $x$ and $y$. The study of ${\Pr}(S, R)$ was initiated in \cite{jutireka}. Note that ${\Pr}(R, R) = {\Pr}(R)$, a notion called  commuting probability of $R$ introduced by Machale \cite{dmachale}  in the year 1976.  
It may be mentioned here that the commuting probability of algebraic structures was originated from the works of Erd$\ddot{\rm o}$s and Tur$\acute {\rm a}$n  \cite{pEpT68} in the year 1968.
In this paper we consider the probability that the commutator of a randomly chosen pair of elements, one from the subring $S$ and the other from $R$, equals a given element $r$ of $R$. We write ${\Pr}_r(S, R)$ to denote this probability. Therefore
\begin{equation}\label{mainformula}
{\Pr}_r(S, R) = \frac{|\lbrace(x, y)\in S\times R : [x,y] = r\rbrace|} {|S||R|}.
\end{equation}
Clearly ${\Pr}_r(S, R) = 0$  if and only if $r \notin K(S,R) := \{[x,y]:x\in S,y\in R\}$. Therefore we consider $r$ to be an element of $K(S,R)$ throughout the paper. Also ${\Pr}_0(S, R) = {\Pr}(S, R)$ and ${\Pr}_r(R) := {\Pr}_r(R, R)$, a notion studied in \cite{dn2}. 
In this paper we obtain some computing formulas and bounds for ${\Pr}_r(S,R)$. We also discus an invariance property of ${\Pr}_r(S,R)$ under $\mathbb{Z}$-isoclinism.
The motivation of this paper lies in \cite{nY2015} where analogous generalization of commuting probability of finite group is studied.

We write $[S, R]$ and $[x,R]$ for $x \in S$  to denote the additive subgroups of $(R, +)$ generated by the sets $K(S,R)$ and $\lbrace [x, y] : y\in R\rbrace$ respectively. Note that
$[x,R] = \{[x, y] : y\in R\}$. Let  $Z(S, R) := \{x \in S : xy = yx\,  \forall y \in R\}$.
Then $Z(R) := Z(R,R)$ is the center of $R$. Further, if $r \in R$ then the set $C_S(r) := \{x\in S : xr = rx\}$ is a subring of $S$ and $\underset{r \in R}{\cap} C_S(r) = Z(S,R)$. We write $\frac{R}{S}$ and $|R : S|$ to denote the additive quotient group  and the index of $S$ in $R$.

\section{Computing formula for ${\Pr}_r(S, R)$}
In this section, we derive some computing formulas for ${\Pr}_r(S, R)$. We begin with the following useful lemmas.

\begin{lemma}{\rm \cite[Lemma 2.1]{dn2}\label{lemma1}}
Let $R$ be a  a finite ring. Then 
\[
|[x, R]| = |R : C_R(x)| \text{ for all } x \in R.
\]
\end{lemma}

\begin{lemma}\label{lemma2}
Let $S$ be a subring of a finite ring $R$ and $T_{x, r}(S, R) = \{y\in R : [x, y] = r\}$ for $x\in S$ and $r\in R$. Then we have the followings 
\begin{enumerate}
\item  $T_{x, r}(S, R) \ne \phi$ if and only if $r \in [x, R]$.
\item If $T_{x, r}(S, R)\neq \phi$ then $T_{x, r}(S, R) = t + C_R(x)$ for some $t\in T_{x, r}$.
\end{enumerate}
\end{lemma}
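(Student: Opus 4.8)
The plan is to prove both statements directly from the definition of $T_{x,r}(S,R)$, the only structural input being the additivity of the ring commutator in its second argument, namely $[x, y_1 + y_2] = [x, y_1] + [x, y_2]$, which follows at once from the distributive laws $x(y_1+y_2) = xy_1 + xy_2$ and $(y_1+y_2)x = y_1 x + y_2 x$. I would also record that, by the definition of centralizer, an element $c \in R$ lies in $C_R(x)$ if and only if $[x,c] = 0$.

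Part (a) is essentially a restatement of the definitions. I would observe that $T_{x,r}(S,R) \neq \phi$ means precisely that there exists some $y \in R$ with $[x,y] = r$, and this is exactly the assertion $r \in \{[x,y] : y \in R\}$. Since the excerpt records that $[x,R] = \{[x,y] : y \in R\}$, the equivalence $T_{x,r}(S,R) \neq \phi \iff r \in [x,R]$ is immediate.

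For part (b), I would assume $T_{x,r}(S,R) \neq \phi$ and fix an element $t \in T_{x,r}(S,R)$, so that $[x,t] = r$, and then establish the set equality $T_{x,r}(S,R) = t + C_R(x)$ by proving two inclusions. For the inclusion $\subseteq$, take any $y \in T_{x,r}(S,R)$; then $[x,y] = r = [x,t]$, and additivity gives $[x, y - t] = [x,y] - [x,t] = 0$, so $y - t \in C_R(x)$ and hence $y \in t + C_R(x)$. For the reverse inclusion, take $c \in C_R(x)$, so that $[x,c] = 0$; then $[x, t + c] = [x,t] + [x,c] = r + 0 = r$, whence $t + c \in T_{x,r}(S,R)$.

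I do not anticipate a genuine obstacle here, since the argument amounts to a coset-type computation. The one point requiring care is to apply the additivity of $[x, \cdot]$ in the correct (second) variable and to invoke the characterization of $C_R(x)$ as the set of elements $c$ with $[x,c] = 0$; once these are in place, both inclusions reduce to single-line verifications.
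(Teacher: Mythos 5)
Your proof is correct and follows essentially the same route as the paper's: part (a) by unwinding definitions, and part (b) by verifying the two inclusions $t + C_R(x) \subseteq T_{x,r}(S,R)$ and $T_{x,r}(S,R) \subseteq t + C_R(x)$ via the additivity of $[x,\cdot]$. If anything, your write-up is slightly more careful than the paper's, which leaves the commutator computations implicit and contains a typo in stating the second inclusion.
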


\begin{proof}
Part (a) follows from the fact that $y \in T_{s, r}(S, R)$ if and only if $r \in [s, R]$. Let $t \in T_{x, r}(S, R)$ and  $p \in t + C_R(x)$. Then $[x, p] = r$ and so $p \in T_{x, r}(S, R)$. Therefore,  $t + C_R(x) \subseteq T_{x, r}(S, R)$. Again, if $y \in T_{x, r}(S, R)$ then  $(y - t) \in C_R(x)$ and so $y \in t + C_R(x)$. Therefore, $t + C_R(x)\subseteq T_{x, r}(S, R)$. Hence part (b) follows.
\end{proof}
\noindent Now we state and prove the following  main results of this section.

\begin{theorem}\label{com-thm}
Let $S$  be a subring of  a finite ring $R$. Then
\[
{\Pr}_r(S,R) = \frac {1}{|S||R|}\underset{r\in [x, R]}{\underset{x\in S}{\sum}}|C_R(x)| = \frac {1}{|S|}\underset{r\in [x, R]}{\underset{x\in S}{\sum}}\frac{1}{|[x, R]|}.
\]
\end{theorem}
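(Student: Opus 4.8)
The plan is to count the numerator of \eqref{mainformula}, namely $N := |\{(x,y) \in S \times R : [x,y] = r\}|$, by fibering over the first coordinate, so that the whole argument reduces to the two lemmas above. First I would fix $x \in S$ and note that the set of admissible $y$ is precisely $T_{x,r}(S,R) = \{y \in R : [x,y] = r\}$; partitioning the counting set according to the value of $x$ then gives $N = \sum_{x \in S} |T_{x,r}(S,R)|$.

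Next I would feed this sum into Lemma \ref{lemma2}. Part (a) shows that $T_{x,r}(S,R) = \phi$ unless $r \in [x,R]$, so the terms with $r \notin [x,R]$ vanish and the summation collapses to those $x \in S$ with $r \in [x,R]$. For each such $x$, part (b) identifies $T_{x,r}(S,R)$ with a coset $t + C_R(x)$ of the additive subgroup $C_R(x) \leq (R,+)$, whence $|T_{x,r}(S,R)| = |C_R(x)|$. Dividing $N$ by $|S||R|$ then delivers the first displayed equality
\[
{\Pr}_r(S,R) = \frac{1}{|S||R|} \sum_{\substack{x \in S \\ r \in [x,R]}} |C_R(x)|.
\]

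Finally, to reach the second expression I would apply Lemma \ref{lemma1}, which yields $|[x,R]| = |R : C_R(x)|$ and hence $|C_R(x)| = |R|/|[x,R]|$ for every $x \in S$. Substituting this into the first formula and cancelling the factor $|R|$ gives the second equality. I do not anticipate any genuine obstacle, since both ingredients are already at hand and the combinatorics is a routine fibering argument. The one point deserving attention is to keep the summation index restricted to $\{x \in S : r \in [x,R]\}$ throughout, because the coset description in Lemma \ref{lemma2}(b) --- and with it the equality $|T_{x,r}(S,R)| = |C_R(x)|$ --- is valid only when the fiber is nonempty, which is exactly the condition guaranteed by part (a).
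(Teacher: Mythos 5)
Your proposal is correct and follows essentially the same route as the paper: fibering the set $\{(x,y)\in S\times R : [x,y]=r\}$ over $x\in S$, invoking Lemma \ref{lemma2} to discard empty fibers and to identify each nonempty fiber with a coset of $C_R(x)$ of size $|C_R(x)|$, and then applying Lemma \ref{lemma1} to convert $|C_R(x)|$ into $|R|/|[x,R]|$ for the second equality. Your explicit remark that the coset description applies only to nonempty fibers is a point the paper leaves implicit, but the argument is the same.
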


\begin{proof}
Note that $\{(x, y) \in S\times R : [x, y] = r\} = \underset{x\in S}{\cup}(\{x\}\times T_{x, r}(S, R))$. Therefore, by \eqref{mainformula} and Lemma \ref{lemma2}, we have

\begin{equation} \label{comfor1}
|S||R|{\Pr}_r(S,R) = \underset{x \in S}{\sum} |T_{x, r}(S, R)| = \underset{r\in [x, R]}{\underset{x\in S}{\sum}}|C_R(x)|.
\end{equation}
The second part follows  from \eqref{comfor1} and Lemma  \ref{lemma1}.
\end{proof}

\begin{proposition}\label{symmetricity}
Let $S$ be a subring of a finite ring $R$ and $r \in  R$. Then
${\Pr}_r(S, R) = {\Pr}_{-r}(R, S)$. However, if $2r = 0$  then ${\Pr}_r(S, R) = {\Pr}_{r}(R, S)$.
\end{proposition}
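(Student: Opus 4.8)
The plan is to exhibit an explicit bijection between the two solution sets counted in the numerators. I first interpret $\Pr_{-r}(R, S)$ through the defining formula \eqref{mainformula} with the roles of the two arguments interchanged, so that
\[
\Pr_{-r}(R, S) = \frac{|\{(u, v) \in R \times S : [u, v] = -r\}|}{|R||S|}.
\]
Writing $A = \{(x, y) \in S \times R : [x, y] = r\}$ and $B = \{(u, v) \in R \times S : [u, v] = -r\}$, and noting that the denominators $|S||R|$ and $|R||S|$ agree, it suffices to prove $|A| = |B|$.

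The main step is to consider the swap map $\sigma \colon S \times R \to R \times S$, $\sigma(x, y) = (y, x)$, which is evidently a bijection. The crucial property is the antisymmetry of the commutator: for all $x, y \in R$,
\[
[y, x] = yx - xy = -(xy - yx) = -[x, y].
\]
Consequently $(x, y) \in A$ exactly when $[x, y] = r$, which by the above holds exactly when $[y, x] = -r$, i.e. exactly when $\sigma(x, y) \in B$. Thus $\sigma$ restricts to a bijection from $A$ onto $B$, giving $|A| = |B|$ and hence $\Pr_r(S, R) = \Pr_{-r}(R, S)$.

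For the second assertion I would use the hypothesis $2r = 0$, which forces $-r = r$ in the additive group $(R, +)$. Then $B$ is literally the set $\{(u, v) \in R \times S : [u, v] = r\}$, whose normalized cardinality is $\Pr_r(R, S)$; combining this with the first part yields $\Pr_r(S, R) = \Pr_r(R, S)$. I do not expect any genuine obstacle here: the only point demanding care is correctly tracking the sign that the commutator introduces under the swap, since this is exactly what produces $-r$ in general and collapses to $r$ once $2r = 0$.
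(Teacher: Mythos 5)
Your proof is correct and follows the paper's argument exactly: the paper likewise uses the swap map $(x,y)\mapsto(y,x)$ as a bijection between the two solution sets and deduces the second claim from $r=-r$ when $2r=0$. Your version only makes explicit the antisymmetry $[y,x]=-[x,y]$ that the paper leaves implicit.
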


\begin{proof}
Let $X = \{(x, y) \in S \times R : [x, y] = r\}$ and $Y = \{(y, x) \in R \times S : [y, x] = -r\}$. It is easy to see that $(x, y) \mapsto (y, x)$ defines a bijective mapping from $X$ to $Y$. Therefore, $|X| = |Y|$ and the result follows from \eqref{mainformula}.

Second part follows from the fact that $r = -r$ if $2r = 0$.
\end{proof}

\begin{proposition}
Let $S_1$ and $S_2$ be two  subrings of the finite rings $R_1$ and $R_2$ respectively. If $(r_1, r_2) \in R_1 \times R_2$  then
\[
{\Pr}_{(r_1, r_2)}(S_1 \times S_2, R_1\times R_2) = {\Pr}_{r_1}(S_1, R_1){\Pr}_{r_2}(S_2, R_2).
\]
\end{proposition}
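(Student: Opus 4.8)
The plan is to count solutions directly via the defining formula \eqref{mainformula}. The crucial observation is that the ring operations in $R_1 \times R_2$ are performed componentwise, so for elements $(x_1, x_2), (y_1, y_2) \in R_1 \times R_2$ one has
\[
[(x_1, x_2), (y_1, y_2)] = ([x_1, y_1], [x_2, y_2]).
\]
Consequently, a pair $((x_1, x_2), (y_1, y_2)) \in (S_1 \times S_2) \times (R_1 \times R_2)$ satisfies $[(x_1, x_2), (y_1, y_2)] = (r_1, r_2)$ if and only if the two conditions $[x_1, y_1] = r_1$ and $[x_2, y_2] = r_2$ hold simultaneously.

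First I would introduce the reindexing map
\[
\phi\colon ((x_1, x_2), (y_1, y_2)) \longmapsto ((x_1, y_1), (x_2, y_2)),
\]
and check that it is a bijection from $(S_1 \times S_2) \times (R_1 \times R_2)$ onto $(S_1 \times R_1) \times (S_2 \times R_2)$, its inverse being the obvious regrouping of the four coordinates. Writing $A := \{((x_1, x_2), (y_1, y_2)) \in (S_1 \times S_2)\times(R_1\times R_2) : [(x_1, x_2), (y_1, y_2)] = (r_1, r_2)\}$ for the set counted in the numerator of ${\Pr}_{(r_1, r_2)}(S_1 \times S_2, R_1 \times R_2)$, and $A_i := \{(x_i, y_i) \in S_i \times R_i : [x_i, y_i] = r_i\}$ for $i = 1, 2$, the componentwise characterization above shows that $\phi$ restricts to a bijection of $A$ onto $A_1 \times A_2$. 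Hence $|A| = |A_1|\,|A_2|$.

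Finally, since $|S_1 \times S_2| = |S_1||S_2|$ and $|R_1 \times R_2| = |R_1||R_2|$, I would divide the identity $|A| = |A_1|\,|A_2|$ by $|S_1 \times S_2|\,|R_1 \times R_2| = (|S_1||R_1|)(|S_2||R_2|)$ and apply \eqref{mainformula} to each of the three probabilities, which immediately yields the claimed factorization.

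I do not expect any serious obstacle here: the argument is entirely routine, and the only points requiring even minor care are the componentwise evaluation of the commutator and the verification that $\phi$ genuinely respects the three solution sets. The result is the ring-theoretic analogue of the multiplicativity of ${\Pr}_r$ over direct products familiar from the group setting.
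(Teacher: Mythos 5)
Your proof is correct and follows essentially the same route as the paper: both arguments exploit the componentwise commutator $[(x_1,x_2),(y_1,y_2)] = ([x_1,y_1],[x_2,y_2])$ and the regrouping bijection between the solution set in the product ring and the product $A_1 \times A_2$ of the component solution sets (the paper's map is simply the inverse of your $\phi$), then conclude by dividing cardinalities via \eqref{mainformula}. No gaps; your write-up is if anything slightly more explicit than the paper's about why the bijection respects the solution sets.
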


\begin{proof}
Let $X_i = \{(x_i, y_i) \in S_i\times R_i : [x_i, y_i] = r_i\}$ for $i = 1, 2$ and $Y = \{((x_1, x_2), (y_1, y_2)) \in (S_1\times S_2) \times (R_1\times R_2) : [(x_1, x_2),(y_1, y_2)]= (r_1, r_2)\}$. Then $((x_1, y_1), (x_2, y_2)) \mapsto ((x_1, x_2),(y_1, y_2))$ defines a bijective map from $X_1 \times X_2$ to $Y$. Therefore, $|Y| = |X_1||X_2|$ and hence the result follows from \eqref{mainformula}.
\end{proof}

\noindent Using Proposition \ref{symmetricity} in Theorem \ref{com-thm}, we get the following corollary.
\begin{corollary}\label{formula1}
Let $S$  be a subring of a finite ring  $R$. Then
\[
{\Pr}(R, S) = {\Pr}(S, R) = \frac {1}{|S||R|}\sum_{x\in S}|C_R(x)|  = \frac {1}{|S|}\sum_{x\in S}\frac{1}{|[x, R]|}.
\]
\end{corollary}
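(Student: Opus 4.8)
The plan is to specialize the two general results already proved to the element $r = 0$ and then invoke the symmetry relation. Recall from the introduction that $\Pr(S, R) = \Pr_0(S, R)$ and $\Pr(R, S) = \Pr_0(R, S)$ by definition, so the entire statement reduces to understanding $\Pr_0$.

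First I would apply Theorem \ref{com-thm} with $r = 0$. The only point requiring attention is the index condition $r \in [x, R]$ that appears beneath the summation signs. Here I would observe that $0 = [x, 0] \in [x, R]$ for every $x \in S$ (equivalently, $[x, R]$ is an additive subgroup of $(R, +)$ and hence contains the additive identity $0$). Consequently the restriction $0 \in [x, R]$ is satisfied vacuously, so the conditional sums in Theorem \ref{com-thm} collapse to unconditional sums over all of $S$. This yields
\[
\Pr(S, R) = \Pr_0(S, R) = \frac{1}{|S||R|}\sum_{x \in S}|C_R(x)| = \frac{1}{|S|}\sum_{x \in S}\frac{1}{|[x, R]|},
\]
which accounts for the last two equalities in the statement.

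It then remains to establish the first equality $\Pr(R, S) = \Pr(S, R)$. For this I would apply Proposition \ref{symmetricity} with $r = 0$: since $-0 = 0$, the relation $\Pr_r(S, R) = \Pr_{-r}(R, S)$ specializes to $\Pr_0(S, R) = \Pr_0(R, S)$, that is, $\Pr(S, R) = \Pr(R, S)$. (Alternatively, one may invoke the second assertion of Proposition \ref{symmetricity}, since $2 \cdot 0 = 0$.) Chaining this identity together with the displayed equalities above completes the argument.

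The proof is essentially a routine specialization of earlier results, so I do not anticipate a serious obstacle. The one step that genuinely needs care is the verification that the summation constraint $0 \in [x, R]$ holds automatically for all $x \in S$; this is precisely what permits the conditional sums of Theorem \ref{com-thm} to be rewritten as full sums over $S$, and everything else follows by direct substitution.
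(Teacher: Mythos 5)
Your proposal is correct and follows exactly the paper's route: the paper derives this corollary by combining Proposition \ref{symmetricity} with Theorem \ref{com-thm} at $r=0$, which is precisely your argument. Your explicit verification that $0\in[x,R]$ for every $x\in S$ (so the conditional sums become full sums) is a detail the paper leaves implicit, and it is handled correctly.
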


We conclude this section with the following corollary.
\begin{corollary}
Let $S$ be a subring of a finite non-commutative ring $R$.  If $|[S, R]|= p$, a prime, then
\[
{\Pr}_r(S, R) = \begin{cases}
\frac{1}{p}\left(1 + \frac{p - 1}{|S:Z(S,R)|}\right), &      
                                          \mbox{if } r = 0 \\
\frac{1}{p}\left(1 - \frac{1}{|S:Z(S,R)|}\right), &\mbox{if }           
                                       r \neq 0.
\end{cases}
\]
\end{corollary}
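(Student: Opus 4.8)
The plan is to specialize the second computing formula of Theorem~\ref{com-thm}, namely
\[
{\Pr}_r(S,R) = \frac{1}{|S|}\underset{r\in[x,R]}{\underset{x\in S}{\sum}}\frac{1}{|[x,R]|},
\]
and then to exploit the prime order of $[S,R]$ to pin down each individual subgroup $[x,R]$.

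First I would record the structural observation that, for each fixed $x \in S$, the map $y \mapsto [x,y]$ is an endomorphism of $(R,+)$ (since $[x, y+y'] = [x,y] + [x,y']$), so its image $[x,R]$ is an additive subgroup of $(R,+)$; moreover $[x,R] \subseteq [S,R]$ because every $[x,y]$ lies in $K(S,R)$. Since $|[S,R]| = p$ is prime, Lagrange's theorem forces $[x,R]$ to be either the trivial subgroup $\{0\}$ or the whole of $[S,R]$. The first alternative holds precisely when $[x,y]=0$ for all $y\in R$, i.e.\ when $x\in Z(S,R)$; the second holds exactly for $x\in S\setminus Z(S,R)$, in which case $|[x,R]| = p$. (Note that $Z(S,R)\neq S$, for otherwise $[S,R]=\{0\}$, contradicting $|[S,R]|=p$; this is where non-commutativity of $R$ enters.)

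Next I would split the sum over $x$ according to this dichotomy, partitioning $S$ into the $|Z(S,R)|$ elements of $Z(S,R)$ and the remaining $|S|-|Z(S,R)|$ elements, and here the computation bifurcates on $r$. For $x\in Z(S,R)$ we have $[x,R]=\{0\}$, so the condition $r\in[x,R]$ holds if and only if $r=0$, in which case the term contributes $1$. For $x\notin Z(S,R)$ we have $[x,R]=[S,R]$; since $r\in K(S,R)\subseteq[S,R]$, the condition $r\in[x,R]$ is always satisfied, and the term contributes $\frac{1}{p}$. Consequently, when $r=0$ the inner sum equals $|Z(S,R)| + \frac{1}{p}\bigl(|S|-|Z(S,R)|\bigr)$, whereas when $r\neq 0$ only the non-central terms survive and the sum equals $\frac{1}{p}\bigl(|S|-|Z(S,R)|\bigr)$.

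Finally I would divide by $|S|$ and rewrite $\frac{|Z(S,R)|}{|S|} = \frac{1}{|S:Z(S,R)|}$, which after elementary rearrangement yields the two displayed expressions. The only genuine content is the subgroup dichotomy established in the second step; once that is in place the remainder is bookkeeping. Accordingly, the main (and rather mild) obstacle is to verify carefully that $r\in[x,R]$ for \emph{every} non-central $x$ when $r\neq 0$, which follows from the chain $[x,R]=[S,R]\supseteq K(S,R)\ni r$ together with the hypothesis that $r$ is taken in $K(S,R)$.
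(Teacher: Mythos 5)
Your proposal is correct and takes essentially the same route as the paper: both arguments establish the dichotomy that $[x,R]=\{0\}$ for $x\in Z(S,R)$ while $[x,R]=[S,R]$ (hence $|[x,R]|=p$) for $x\in S\setminus Z(S,R)$, and then split the sum in Theorem~\ref{com-thm} over this partition, with the condition $r\in[x,R]$ selecting all of $S$ when $r=0$ and exactly the non-central elements when $r\neq 0$. Your explicit appeals to Lagrange's theorem, to $r\in K(S,R)\subseteq[S,R]$, and to $Z(S,R)\neq S$ merely spell out steps the paper leaves implicit.
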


\begin{proof}
For $x\in S \setminus Z(S,R)$, we have $\{0\} \subsetneq [x,R] \subseteq [S,R]$. Since $|[S, R]|= p$, it follows that $[S, R] = [x, R]$ and hence $|[x, R]|= p$ for all $x\in S \setminus Z(S,R)$.

If $r = 0$ then by Corollary \ref{formula1}, we have
\begin{align*}
{\Pr}_r(S,R) = & \frac{1}{|S|}\left(|Z(S,R)| + \sum_{x\in S\setminus Z(S,R)}\frac{1}{|[x, R]|}\right)\\ 
= & \frac{1}{|S|}\left(|Z(S,R)| + \frac{1}{p}(|S| - |Z(S,R)|)\right) \\
    = & \frac{1}{p}\left(1 + \frac{p - 1}{|S:Z(S,R)|}\right).
\end{align*}

If $r\neq 0$ then $r\notin [x,R]$ for all $x \in Z(S,R)$ and $r\in [x,R]$ for all $x\in S \setminus Z(S,R)$.
Therefore, by Theorem \ref{com-thm}, we have

\begin{align*}
{\Pr}_r(S,R) = &  \frac {1}{|S|}\underset{x\in S \setminus Z(S,R)}{\sum}\frac{1}{|[x, R]|}
=  \frac{1}{|S|}\underset{x\in S \setminus Z(S,R)}{\sum}\frac{1}{p}\\
= &  \frac{1}{p}\left(1 - \frac{1}{|S:Z(S,R)|}\right).
\end{align*}
Hence, the result follows.
\end{proof}

\section{Bounds for ${\Pr}_r(S, R)$}
If $S$ is a subring of a finite ring $R$ then it was shown in  \cite[Theorem 2.16]{jutireka} that 
\begin{equation}\label{lbThm2.5}
{\Pr}(S, R) \geq \frac {1}{|K(S,R)|}\left(1 + \frac{|K(S,R)| - 1}{|S : Z(S,R)|}\right).
\end{equation}
Also, if $p$ is the smallest prime dividing $|R|$ then by   \cite[Theorem 2.5]{jutireka} and  \cite[Corollary 2.6]{jutireka} we have  
\begin{equation}\label{ubCor2.6}
{\Pr}(S, R) \leq \frac{(p -1)|Z(S,R)| + |S|}{p|S|} 
\text{ and }{\Pr}(R) \leq \frac{(p -1)|Z(R)| + |R|}{p|R|}.
\end{equation}
In this section, we obtain several bounds for ${\Pr}_r(S, R)$ and show that some of our bounds are better than the bounds given in \eqref{lbThm2.5} and \eqref{ubCor2.6}. We begin with the following upper bounds.
\begin{proposition}
Let $S$  be a subring of a finite ring $R$. If $p$ is the smallest prime dividing $|R|$ and $r \ne 0$ then
\[
{\Pr}_r(S, R)\leq \frac {|S| - |Z(S, R)|}{p|S|} < \frac {1}{p}.
\]
\end{proposition}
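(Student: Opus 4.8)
The plan is to work directly from the second computing formula in Theorem \ref{com-thm}, namely ${\Pr}_r(S,R) = \frac{1}{|S|}\sum_{x\in S,\, r\in[x,R]}\frac{1}{|[x,R]|}$, and to control the summation in two stages: first restrict the index set, then bound each summand. The crucial observation exploiting $r\neq 0$ is that the condition $r\in[x,R]$ forces $x\notin Z(S,R)$. Indeed, if $x\in Z(S,R)$ then $[x,R]=\{0\}$, so the nonzero element $r$ cannot lie in $[x,R]$. Consequently the sum ranges only over those $x\in S\setminus Z(S,R)$ for which $r\in[x,R]$, and in particular the number of surviving terms is at most $|S\setminus Z(S,R)| = |S|-|Z(S,R)|$.

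The key quantitative step is to show that every surviving summand satisfies $\frac{1}{|[x,R]|}\leq\frac{1}{p}$. For each such $x$ we have $x\notin Z(S,R)$, so $[x,R]\neq\{0\}$ and hence $|[x,R]|>1$. By Lemma \ref{lemma1}, $|[x,R]| = |R:C_R(x)|$, which is a divisor of $|R|$. A divisor of $|R|$ that exceeds $1$ is at least the smallest prime $p$ dividing $|R|$, so $|[x,R]|\geq p$ and therefore $\frac{1}{|[x,R]|}\leq\frac{1}{p}$. This is the step I expect to carry the real content of the argument; everything else is bookkeeping around it.

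Combining these two observations yields
\[
{\Pr}_r(S,R) = \frac{1}{|S|}\underset{r\in[x,R]}{\underset{x\in S}{\sum}}\frac{1}{|[x,R]|} \leq \frac{1}{|S|}\cdot\bigl(|S|-|Z(S,R)|\bigr)\cdot\frac{1}{p} = \frac{|S|-|Z(S,R)|}{p|S|}.
\]
For the final strict inequality I would use that $Z(S,R)$ is a subring and hence contains $0$, so $|Z(S,R)|\geq 1$. This gives $|S|-|Z(S,R)| < |S|$, and dividing by $p|S|$ produces $\frac{|S|-|Z(S,R)|}{p|S|} < \frac{1}{p}$, completing the chain of inequalities.

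I do not anticipate a genuine obstacle here; the only point requiring care is the divisibility argument $|[x,R]|\geq p$, which hinges on reading $|[x,R]|$ as the subgroup index $|R:C_R(x)|$ via Lemma \ref{lemma1} so that Lagrange's theorem applies inside $(R,+)$.
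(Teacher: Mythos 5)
Your proposal is correct and follows essentially the same route as the paper's own proof: both use Theorem \ref{com-thm} to reduce to the sum over $x \in S \setminus Z(S,R)$ with $r \in [x,R]$, and both invoke Lemma \ref{lemma1} together with the smallest-prime-divisor argument to get $|[x,R]| = |R : C_R(x)| \geq p$. Your explicit justification of the final strict inequality via $0 \in Z(S,R)$, hence $|Z(S,R)| \geq 1$, is a small point the paper leaves implicit, but the argument is the same.
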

\begin{proof}
Since  $r \ne 0$ we have $S \ne Z(S, R)$. If $x \in Z(S, R)$ then $r \notin [s, R]$. If $x \in S \setminus Z(S, R)$ then $C_R(x) \ne R$. Therefore, by Lemma \ref{lemma1}, we have  $|[x, R]| = |R : C_R(x)| > 1$. Since   $p$ is the smallest prime dividing $|R|$ we have $|[x, R]| \geq p$.  Hence the result follows from Theorem \ref{com-thm}.
\end{proof}

\begin{proposition}\label{ub02}
Let $S$ be a subring of a finite ring $R$. Then ${\Pr}_r(S, R)$ $\leq {\Pr}(S, R)$ with equality if and only if $r = 0$. 
\end{proposition}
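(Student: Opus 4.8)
The plan is to compare the two quantities directly through their closed-form expressions as sums of positive terms. By the second formula of Theorem \ref{com-thm}, ${\Pr}_r(S,R)$ equals $\frac{1}{|S|}$ times the sum of $\frac{1}{|[x,R]|}$ taken over exactly those $x \in S$ for which $r \in [x,R]$, whereas Corollary \ref{formula1} expresses ${\Pr}(S,R)$ as $\frac{1}{|S|}$ times the same sum taken over \emph{all} $x \in S$. Since the index set $\{x \in S : r \in [x,R]\}$ is contained in $S$ and every summand $\frac{1}{|[x,R]|}$ is strictly positive, the inequality ${\Pr}_r(S,R) \leq {\Pr}(S,R)$ follows at once.

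For the equality statement I would treat the two directions separately. The easy direction is to observe that if $r = 0$, then $0 = [x,0] \in [x,R]$ for every $x \in S$, so the two index sets coincide, the two sums are identical, and ${\Pr}_0(S,R) = {\Pr}(S,R)$ (consistent with the identification ${\Pr}_0(S,R) = {\Pr}(S,R)$ noted in the introduction). For the converse, suppose equality holds. Because the index set of the restricted sum is contained in that of the full sum and all summands are positive, equality of the sums forces the two index sets to be equal; that is, $r \in [x,R]$ must hold for \emph{every} $x \in S$. Specialising to $x = 0 \in S$ (more generally, to any $x \in Z(S,R)$), we have $[x,R] = \{0\}$, whence $r \in \{0\}$ and therefore $r = 0$.

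The argument is essentially bookkeeping once the two summation formulas are in place, so there is no serious technical obstacle. The only point that genuinely requires care is the equality analysis: I must use the positivity of the summands to upgrade the numerical equality of the restricted and full sums into set-theoretic equality of their index sets, and then exploit a central element (namely $0$, or any element of $Z(S,R)$) to extract the conclusion $r = 0$.
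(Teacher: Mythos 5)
Your proof is correct and follows essentially the same route as the paper: both compare the restricted sum over $\{x \in S : r \in [x,R]\}$ with the full sum over $S$ coming from Theorem \ref{com-thm} and Corollary \ref{formula1} (you use the $\frac{1}{|[x,R]|}$ form, the paper the $|C_R(x)|$ form, which are equivalent via Lemma \ref{lemma1}). In fact your treatment of the equality case is more complete than the paper's, which simply asserts it; your observation that equality forces $r \in [x,R]$ for every $x \in S$, and in particular $r \in [0,R] = \{0\}$, supplies the justification the paper omits.
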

\begin{proof}
By Theorem \ref{com-thm} and Corollary \ref{formula1}, we have
\[
{\Pr}_r(S, R) = \frac {1}{|S||R|}\underset{r \in [x, R]}{\underset{x\in S}{\sum}}|C_R(x)|
 \leq \frac {1}{|S||R|}\underset{x\in S}{\sum}|C_R(x)|
=  \Pr(S, R).
\]
The equality holds if and only if $r = 0$.
\end{proof}

\begin{proposition}\label{ub03}
If $S_1\subseteq S_2$ are two subrings of a finite ring $R$. Then
\[
{\Pr}_r(S_1, R) \leq |S_2 : S_1|{\Pr}_r(S_2, R).
\]
\end{proposition}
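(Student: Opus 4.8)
The plan is to reduce the statement entirely to the counting formula of Theorem~\ref{com-thm} and then to compare the two resulting sums term by term. The crucial preliminary observation is that both the set $[x,R]$ and the subring $C_R(x)$ depend only on $x$ and on the ambient ring $R$, and not on which subring of $R$ the element $x$ is regarded as belonging to. Consequently, the condition ``$r\in[x,R]$'' and the value $|C_R(x)|$ are intrinsic to $x$, so the summand indexed by a given $x$ is identical whether $x$ is viewed inside $S_1$ or inside $S_2$.

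First I would apply Theorem~\ref{com-thm} to each of the two subrings, writing
\[
{\Pr}_r(S_1, R) = \frac {1}{|S_1||R|}\underset{r\in [x, R]}{\underset{x\in S_1}{\sum}}|C_R(x)| \quad\text{and}\quad {\Pr}_r(S_2, R) = \frac {1}{|S_2||R|}\underset{r\in [x, R]}{\underset{x\in S_2}{\sum}}|C_R(x)|.
\]
Next, since $S_1\subseteq S_2$, the index set $\{x\in S_1 : r\in[x,R]\}$ is a subset of $\{x\in S_2 : r\in[x,R]\}$, and every summand $|C_R(x)|$ is a positive integer, hence nonnegative. Therefore the sum over $S_1$ is bounded above by the sum over $S_2$, giving
\[
{\Pr}_r(S_1, R) \leq \frac {1}{|S_1||R|}\underset{r\in [x, R]}{\underset{x\in S_2}{\sum}}|C_R(x)|.
\]

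Finally I would invoke the index identity $|S_2 : S_1| = |S_2|/|S_1|$ for the finite additive groups $(S_1,+)\subseteq(S_2,+)$, which yields $\frac{1}{|S_1|} = \frac{|S_2 : S_1|}{|S_2|}$. Substituting this into the last display and recognizing the remaining factor as ${\Pr}_r(S_2, R)$ via Theorem~\ref{com-thm} gives exactly ${\Pr}_r(S_1, R)\leq |S_2 : S_1|\,{\Pr}_r(S_2, R)$. There is no serious obstacle here; the only points that require care are the intrinsic nature of $[x,R]$ and $C_R(x)$ noted above (so that the smaller sum is genuinely a sub-sum of the larger one) and the use of Lagrange's theorem for additive subgroups to justify the index identity. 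If one wished, one could also read off from this argument that equality holds precisely when no $x\in S_2\setminus S_1$ satisfies $r\in[x,R]$.
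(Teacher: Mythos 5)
Your proposal is correct and follows essentially the same route as the paper: both apply Theorem~\ref{com-thm} to $S_1$ and $S_2$ and observe that the sum over $\{x\in S_1: r\in[x,R]\}$ is a sub-sum of that over $\{x\in S_2: r\in[x,R]\}$, the only cosmetic difference being that the paper compares $|S_i||R|{\Pr}_r(S_i,R)$ directly while you divide through and invoke $|S_2:S_1|=|S_2|/|S_1|$. Your closing remark on the equality case also matches the paper's note immediately following the proposition.
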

\begin{proof}
By Theorem \ref{com-thm}, we have
\begin{align*}
|S_1||R|{\Pr}_r(S_1, R) = &\underset{r  \in [x, R]}{\underset{x \in S_1}{\sum}}|C_{R}(x)|\\
\leq &\underset{r \in [x, R]}{\underset{x\in S_2}{\sum}}|C_{R}(x)| = |S_2||R|{\Pr}_r(S_2,R).
\end{align*}
Hence the result follows.
\end{proof}
\noindent Note that  equality holds in Proposition \ref{ub03} if and only if
$r \notin  [x, R]$   for all   $x\in S_2 \setminus S_1$. If $r = 0$ then the condition of equality reduces to $S_1 = S_2$. Putting $S_1 = S$ and $S_2 = R$ in Proposition \ref{ub03} we have the following corollary.

\begin{corollary}
If $S$ is a  subring of a finite ring $R$ then
\[
{\Pr}_r(S, R) \leq |R : S|{\Pr}_r(R).
\]
\end{corollary}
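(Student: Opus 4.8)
The plan is to obtain this bound as an immediate specialization of Proposition \ref{ub03}. The only observation required is that $R$ is itself a subring of $R$, so the pair $S_1 = S$ and $S_2 = R$ satisfies the hypothesis $S_1 \subseteq S_2$ of that proposition. Thus the corollary is really a substitution rather than a fresh argument, and the work has already been done in establishing Proposition \ref{ub03}.

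First I would invoke Proposition \ref{ub03} with $S_1 = S$ and $S_2 = R$. This gives directly
\[
{\Pr}_r(S, R) \leq |R : S|\,{\Pr}_r(R, R).
\]
Then I would rewrite the right-hand side using the convention ${\Pr}_r(R) := {\Pr}_r(R, R)$ fixed in the Introduction, which produces exactly the stated inequality. There is essentially no obstacle here, since the monotonicity-type estimate of Proposition \ref{ub03} carries all the weight and the index $|R : S|$ is already the correct scaling factor.

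If a self-contained derivation were preferred, I would instead start from Theorem \ref{com-thm} in the form $|S||R|{\Pr}_r(S, R) = \sum_{x \in S,\ r \in [x, R]}|C_R(x)|$ and compare the sum taken over $S$ with the analogous sum taken over all of $R$, which equals $|R||R|{\Pr}_r(R)$. Since every term $|C_R(x)|$ is nonnegative, restricting the index set from $R$ to $S$ only decreases the sum, and dividing through by $|S||R|$ while accounting for the ratio $|R|/|S| = |R : S|$ yields the claim. The mild point to check in this alternate route is simply that the condition $r \in [x, R]$ defining the index set is imposed identically in both sums, so no extra terms are introduced; this is routine.
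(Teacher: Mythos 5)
Your proposal is correct and matches the paper exactly: the paper derives this corollary by putting $S_1 = S$ and $S_2 = R$ in Proposition \ref{ub03}, precisely as you do. Your alternate self-contained derivation is also sound, but it merely re-runs the proof of Proposition \ref{ub03} in this special case, so nothing new is needed.
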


For any subring $S$ of $R$, let $m_S = \min\{|[x,R]|: x\in S\setminus Z(S,R)\}$ and $M_S = \max\{|[x,R]|: x\in S\setminus Z(S,R)\}$. In the following theorem we give  bounds for ${\Pr}(S,R)$ in terms of  $m_S$ and $M_S$.

\begin{theorem}\label{compare}
Let $S$ be a subring of a finite ring  $R$. Then
  \[
  \frac{1}{M_S}\left(1 + \frac{M_S - 1}{|S : Z(S,R)|}\right)\leq \Pr(S,R) \leq  \frac{1}{m_S}\left(1 + \frac{m_S - 1}{|S : Z(S,R)|}\right).
  \]
  The equality holds if and only if $m_S = M_S = |[x,R]|$ for all $x\in S\setminus Z(S,R)$.
\end{theorem}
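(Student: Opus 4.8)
The plan is to start from the second expression in Corollary \ref{formula1}, namely $\Pr(S,R) = \frac{1}{|S|}\sum_{x\in S}\frac{1}{|[x,R]|}$, and split the sum according to whether $x$ lies in $Z(S,R)$ or in $S\setminus Z(S,R)$. For $x\in Z(S,R)$ we have $[x,R]=\{0\}$, so $|[x,R]|=1$ and each such term contributes exactly $1$; these account for a block summing to $|Z(S,R)|$. For each of the remaining terms, with $x\in S\setminus Z(S,R)$, the very definitions of $m_S$ and $M_S$ give $m_S\le |[x,R]|\le M_S$, and hence $\frac{1}{M_S}\le\frac{1}{|[x,R]|}\le\frac{1}{m_S}$. (Here I tacitly assume $S\setminus Z(S,R)\ne\emptyset$, as otherwise $m_S$ and $M_S$ are not defined and $\Pr(S,R)=1$ trivially.)

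Next I would bound the truncated sum termwise. Replacing every factor $\frac{1}{|[x,R]|}$ by the upper bound $\frac{1}{m_S}$ yields
\[
\Pr(S,R)\le\frac{1}{|S|}\left(|Z(S,R)|+\frac{|S|-|Z(S,R)|}{m_S}\right),
\]
and the analogous substitution by $\frac{1}{M_S}$ produces the matching lower estimate with $m_S$ replaced by $M_S$. The only remaining work is an algebraic simplification: factoring $\frac{1}{m_S}$ out of the right-hand side and using $|S:Z(S,R)|=|S|/|Z(S,R)|$ collapses the expression to $\frac{1}{m_S}\left(1+\frac{m_S-1}{|S:Z(S,R)|}\right)$, and identically for the lower bound with $M_S$. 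This is the same computation already carried out for the prime case in the corollary to Theorem \ref{com-thm}, so I would simply collect the $|Z(S,R)|$ terms and verify the rewriting.

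For the equality clause I would track when the termwise inequalities are strict. Each bound $\frac{1}{M_S}\le\frac{1}{|[x,R]|}\le\frac{1}{m_S}$ is an equality precisely when $|[x,R]|$ equals the corresponding extremum, so the displayed upper bound is attained exactly when $|[x,R]|=m_S$ for every $x\in S\setminus Z(S,R)$, and the lower bound exactly when $|[x,R]|=M_S$ for every such $x$. Both displayed inequalities therefore become equalities simultaneously if and only if $m_S=M_S=|[x,R]|$ for all $x\in S\setminus Z(S,R)$, which is the stated condition.

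I do not anticipate a substantive obstacle here: the argument is essentially bookkeeping driven by the closed formula of Corollary \ref{formula1}, and the only point requiring mild care is confirming that the termwise-bounded sum simplifies exactly to the claimed closed forms, together with the non-emptiness caveat on $S\setminus Z(S,R)$.
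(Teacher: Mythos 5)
Your proposal is correct and takes essentially the same route as the paper's own proof: both split the sum from Corollary~\ref{formula1} into the $Z(S,R)$ part (contributing $|Z(S,R)|$) and the $S\setminus Z(S,R)$ part, bound each remaining term $\frac{1}{|[x,R]|}$ between $\frac{1}{M_S}$ and $\frac{1}{m_S}$, and simplify. Your explicit treatment of the degenerate case $S = Z(S,R)$ and of the equality clause is slightly more careful than the paper, which leaves both implicit, but this is a refinement rather than a different argument.
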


\begin{proof}
Since $m_S \leq |[x,R]|$ and $M_S \geq |[x,R]|$ for all $x \in S\setminus Z(S,R)$,  we have
\begin{equation}\label{eqm_s-1}
\frac{|S| - |Z(S, R)|}{M_S} \leq \sum_{x\in S \setminus Z(S,R)}\frac{1}{|[x, R]|} \leq \frac{|S| - |Z(S, R)|}{m_S}.
\end{equation}
Again, by Corollary \ref{formula1}, we have

\begin{equation}\label{eqm_s-2}
\Pr(S,R) = \frac {1}{|S|}\left(|Z(S,R)| + \sum_{x\in S \setminus Z(S,R)}\frac{1}{|[x, R]|}\right).
\end{equation}
Hence, the result follows from \eqref{eqm_s-1} and \eqref{eqm_s-2}.
\end{proof}

\noindent Note that for any two integers  $m \geq n$, we have
\begin{equation}\label{lemma4.4}
\frac {1}{n}\left(1 + \frac{n - 1}{|S : Z(S,R)|}\right) \geq \frac {1}{m}\left(1 + \frac{m - 1}{|S : Z(S,R)|}\right).
\end{equation}
Clearly equality holds in \eqref{lemma4.4} if $Z(S,R) = S$. Further, if $Z(S,R)\neq S$ then equality holds  if and only if $m=n$. Since $|K(S,R)|\geq M_S$, by \ref{lemma4.4}, it follows that
\[
\frac {1}{M_S}\left(1 + \frac{M_S - 1}{|S : Z(S,R)|}\right) \geq \frac {1}{|K(S,R)|}\left(1 + \frac{|K(S,R)| - 1}{|S : Z(S,R)|}\right).
\]
Therefore, the lower bound obtained in Theorem \ref{compare} is better than the lower bound given in \eqref{lbThm2.5} for $\Pr(S, R)$. Again, if $p$ is the smallest prime divisor of $|R|$ then  $p \leq m_S$ and hence, by \eqref{lemma4.4}, we have
\[
\frac{1}{m_S}\left(1 + \frac{m_S - 1}{|S : Z(S,R)|}\right)\leq \frac{(p -1)|Z(S,R)| + |S|}{p|S|}.
\]
This shows that the upper bound obtained in Theorem \ref{compare} is better than the upper bound given in \eqref{ubCor2.6} for $\Pr(S, R)$.

Putting $S = R$ in Theorem \ref{compare} we have the following corollary.

\begin{corollary}\label{compare1}
Let $R$ be a finite  ring. Then
  \[
  \frac{1}{M_R}\left(1 + \frac{M_R - 1}{|R : Z(R)|}\right)\leq \Pr(R) \leq  \frac{1}{m_R}\left(1 + \frac{m_S - 1}{|R : Z(R)|}\right).
  \]
  The equality holds if and only if $m_R = M_R = |[x,R]|$ for all $x\in R\setminus Z(R)$.
\end{corollary}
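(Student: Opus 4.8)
The plan is to obtain this as the direct specialization of Theorem~\ref{compare} to the case $S = R$, so no new argument beyond a substitution is required. First I would record how each ingredient of Theorem~\ref{compare} reads when $S = R$. By the conventions fixed in the introduction, $Z(R,R) = Z(R)$ is exactly the center of $R$, and $\Pr(R,R) = \Pr(R)$ is the usual commuting probability of $R$. Correspondingly, the quantities $m_S$ and $M_S$ become
$m_R = \min\{|[x,R]| : x \in R \setminus Z(R)\}$ and $M_R = \max\{|[x,R]| : x \in R \setminus Z(R)\}$,
while the index $|S : Z(S,R)|$ becomes $|R : Z(R)|$.

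With these identifications in hand, I would substitute $S = R$ into the two-sided inequality of Theorem~\ref{compare}. Under the replacements $M_S \mapsto M_R$, $m_S \mapsto m_R$, and $|S:Z(S,R)| \mapsto |R:Z(R)|$, the left-hand bound $\frac{1}{M_S}(1 + \frac{M_S - 1}{|S : Z(S,R)|})$ turns into $\frac{1}{M_R}(1 + \frac{M_R - 1}{|R : Z(R)|})$, and the right-hand bound transforms analogously into $\frac{1}{m_R}(1 + \frac{m_R - 1}{|R : Z(R)|})$, yielding the claimed chain of inequalities for $\Pr(R)$. The equality clause carries over verbatim: the condition $m_S = M_S = |[x,R]|$ for all $x \in S \setminus Z(S,R)$ becomes $m_R = M_R = |[x,R]|$ for all $x \in R \setminus Z(R)$.

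Since the argument is a pure specialization, there is no genuine obstacle here; the only point worth double-checking is that the notation matches the conventions of the introduction, in particular that $Z(R)$ denotes the center and $\Pr(R)$ the commuting probability of $R$. Once that is confirmed, the statement is immediate from Theorem~\ref{compare}.
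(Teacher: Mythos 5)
Your proposal is correct and matches the paper exactly: the corollary is obtained there, as in your argument, by the direct substitution $S = R$ in Theorem~\ref{compare}, with $Z(S,R)$ becoming $Z(R)$ and $\Pr(S,R)$ becoming $\Pr(R)$. You even silently repair the paper's typographical slip, since the printed upper bound reads $m_S$ where it should read $m_R$, and your version $\frac{1}{m_R}\left(1 + \frac{m_R - 1}{|R : Z(R)|}\right)$ is the intended statement.
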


We conclude this section noting that  the lower bound obtained in Corollary \ref{compare1} is better than the lower bound obtained in \cite[Corollary 2.18]{jutireka}. Also, if $p$ is the smallest prime divisor of $|R|$ then the upper bound obtained in Corollary \ref{compare1} is better than the upper bound given in \eqref{ubCor2.6} for $\Pr(R)$.

\section{$\mathbb{Z}$-isoclinism and ${\Pr}_r(S, R)$}

The idea of isoclinism of groups was introduced by  Hall \cite{pH40} in 1940. Years after in 2013, Buckley et al. \cite{BMS} introduced $\mathbb{Z}$-isoclinism of rings. Recently, Dutta et al. \cite{jutireka} have introduced $\mathbb Z$-isoclinism  between two pairs of rings, generalizing the notion of  $\mathbb Z$-isoclinism of rings. Let $S_1$ and $S_2$ be two subrings of the rings $R_1$ and $R_2$ respectively.  Recall that a pair of mappings $(\alpha,\beta)$ is called a $\mathbb Z$-isoclinism between $(S_1, R_1)$ and $(S_2, R_2)$ if $\alpha :\frac {R_1}{Z(S_1, R_1)}\rightarrow \frac {R_2}{Z(S_2, R_2)}$ and $\beta :[S_1, R_1]\rightarrow [S_2, R_2]$ are additive group isomorphisms such that $\alpha \left(\frac {S_1}{Z(S_1, R_1)}\right) = \frac {S_2}{Z(S_2, R_2)}$  and $\beta ([x_1, y_1])=[x_2, y_2]$ whenever $x_i \in S_i$,  $y_i\in R_i$ for  $i = 1, 2$; $\alpha (x_1 +  Z(S_1, R_1)) = x_2 +  Z(S_2, R_2)$ and $\alpha (y_1 +  Z(S_1, R_1)) = y_2 +  Z(S_2, R_2)$. Two pairs of rings are said to be $\mathbb Z$-isoclinic if there exists a $\mathbb Z$-isoclinism between them.

In \cite[Theorem 3.3]{jutireka}, Dutta et al.  proved that $\Pr(S_1, R_1) = \Pr(S_2, R_2)$ if the rings $R_1$ and $R_2$ are finite and  the pairs $(S_1, R_1)$ and $(S_2, R_2)$ are $\mathbb Z$-isoclinic. We conclude this paper with the following  generalization of \cite[Theorem 3.3]{jutireka}.

\begin{theorem}
Let $S_1$ and $S_2$  be two subrings of the finite  rings $R_1$ and $R_2$ respectively. If  $(\alpha,\beta)$ is a    $\mathbb Z$-isoclinism between $(S_1, R_1)$ and $(S_2, R_2)$ then
\[
{\Pr}_r(S_1, R_1) = {\Pr}_{\beta (r)}(S_2, R_2).
\]
\end{theorem}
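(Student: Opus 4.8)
The plan is to start from the second computing formula in Theorem \ref{com-thm}, namely
\[
{\Pr}_r(S_1, R_1) = \frac{1}{|S_1|}\underset{r\in[x,R_1]}{\underset{x\in S_1}{\sum}}\frac{1}{|[x,R_1]|},
\]
and to match it term-by-term against the analogous expression for ${\Pr}_{\beta (r)}(S_2, R_2)$. Writing $Z_i := Z(S_i, R_i)$, the first observation is that for $x \in S_1$ the subgroup $[x, R_1]$, and hence both the summand $\frac{1}{|[x,R_1]|}$ and the condition $r\in[x,R_1]$, depends only on the coset $x + Z_1$, since $[x + z, y] = [x, y]$ for every $z \in Z_1$ and $y \in R_1$. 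This lets me regroup the sum over $S_1$ according to the $|Z_1|$-element cosets in $\frac{S_1}{Z_1}$, extracting a factor $|Z_1|$ and leaving a sum indexed by $\frac{S_1}{Z_1}$; the same reorganization applies to ${\Pr}_{\beta(r)}(S_2,R_2)$ with a factor $|Z_2|$. Note that $\beta(r)$ is defined because $r \in K(S_1,R_1) \subseteq [S_1,R_1]$.

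The key step is the following claim: if $\alpha(x + Z_1) = x' + Z_2$ for some $x \in S_1$ and $x' \in S_2$, then $\beta$ restricts to a bijection of $[x, R_1]$ onto $[x', R_2]$. To prove it I would use that $\alpha$ is surjective, so that as $y$ ranges over $R_1$ the coset $\alpha(y + Z_1)$ ranges over all of $\frac{R_2}{Z_2}$; combined with the defining relation $\beta([x, y]) = [x', y']$, valid whenever $\alpha(y + Z_1) = y' + Z_2$, this gives $\beta([x, R_1]) = [x', R_2]$, and injectivity of $\beta$ upgrades this to a bijection. Two consequences are then immediate and are exactly what I need: $|[x, R_1]| = |[x', R_2]|$, and $r \in [x, R_1]$ if and only if $\beta(r) \in [x', R_2]$. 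I expect this claim to be the crux of the argument and the only place where the full strength of the $\mathbb{Z}$-isoclinism hypothesis (not merely equality of the relevant indices) is used; the rest is bookkeeping.

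Finally I would assemble the pieces. Since $\alpha$ restricts to a bijection $\frac{S_1}{Z_1} \to \frac{S_2}{Z_2}$, it matches the cosets of $S_1$ with those of $S_2$ so that corresponding terms have equal summands and satisfy the two membership conditions simultaneously; hence the two coset-indexed sums coincide, say both equal $\Sigma$. Then
\[
{\Pr}_r(S_1, R_1) = \frac{|Z_1|}{|S_1|}\,\Sigma = \frac{1}{|S_1 : Z_1|}\,\Sigma \quad\text{and}\quad {\Pr}_{\beta(r)}(S_2, R_2) = \frac{1}{|S_2 : Z_2|}\,\Sigma,
\]
and the equality $|S_1 : Z_1| = |S_2 : Z_2|$, again a consequence of $\alpha$ being an isomorphism of $\frac{S_1}{Z_1}$ onto $\frac{S_2}{Z_2}$, yields ${\Pr}_r(S_1, R_1) = {\Pr}_{\beta(r)}(S_2, R_2)$. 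The earlier result of Dutta et al.\ is recovered by taking $r = 0$, for which $\beta(0) = 0$.
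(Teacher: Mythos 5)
Your proposal is correct and takes essentially the same route as the paper: both start from the formula of Theorem \ref{com-thm}, regroup the sum over cosets of $Z(S_i,R_i)$ (extracting the factor $|Z(S_i,R_i)|$), and then match terms using $|S_1 : Z(S_1,R_1)| = |S_2 : Z(S_2,R_2)|$, $|[x_1,R_1]| = |[x_2,R_2]|$, and $r \in [x_1,R_1] \Leftrightarrow \beta(r) \in [x_2,R_2]$. The only difference is that you explicitly verify the key claim $\beta([x,R_1]) = [x',R_2]$ via surjectivity of $\alpha$ and injectivity of $\beta$, which the paper asserts without proof --- a welcome bit of added detail, not a different argument.
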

\begin{proof}
By Theorem \ref{com-thm}, we have
\begin{align*}
{\Pr}_r(S_1, R_1)
=&\frac {|Z(S_1, R_1)|}{|S_1||R_1|}\underset{r \in [x_1, R_1]}{\underset{x_1 + Z(S_1, R_1)\in\frac {S_1}{Z(S_1, R_1)}}{\sum}|C_{R_1}(x_1)|}
\end{align*}
noting that $r \in [x_1, R_1]$ if and only if $r \in [x_1 + z, R_1]$ and $C_{R_1}(x_1) = C_{R_1}(x_1 + z)$ for all $z \in Z(S_1, R_1)$.
Now, by  Lemma \ref{lemma1}, we have
\begin{equation}\label{eqiso-1}
{\Pr}_r(S_1, R_1) = \frac {|Z(S_1, R_1)|}{|S_1|}\underset{r \in [x_1, R_1]}{\underset{x_1 + Z(S_1, R_1)\in\frac {S_1}{Z(S_1, R_1)}}\sum} \frac {1}{|[x_1,R_1]|}.
\end{equation}
Similarly, it can be seen that
\begin{equation}\label{eqiso-2}
{\Pr}_{\beta (r)}(S_2, R_2) = \frac {|Z(S_2, R_2)|}{|S_2|}\underset{\beta (r)\in [x_2, R_2]}{\underset{x_2 + Z(S_2, R_2)\in\frac {S_2}{Z(S_2, R_2)}}\sum}
\frac {1}{|[x_2, R_2]|}.
\end{equation}
Since $(\alpha,\beta)$ is a    $\mathbb Z$-isoclinism between $(S_1, R_1)$ and $(S_2, R_2)$ we have  $\frac {|S_1|}{|Z(S_1, R_1)|} = \frac {|S_2|}{|Z(S_2, R_2)|}$, $|[x_1,R_1]| = |[x_2, R_2]|$ and $r \in [x_1, R_1]$ if and only if  $\beta (r) \in [x_2, R_2]$. Hence, the result follows from \eqref{eqiso-1} and \eqref{eqiso-2}.
\end{proof}

\end{document}